\numberwithin{equation}{section}
\numberwithin{figure}{section}
\theoremstyle{plain}
\newtheorem{thm}{\protect\theoremname}[section]
  \theoremstyle{plain}
  \newtheorem{conjecture}[thm]{\protect\conjecturename}
  \theoremstyle{definition}
  \newtheorem{defn}[thm]{\protect\definitionname}
  \theoremstyle{remark}
  \newtheorem*{rem*}{\protect\remarkname}
  \theoremstyle{plain}
  \newtheorem{lem}[thm]{\protect\lemmaname}
  \theoremstyle{remark}
  \newtheorem{rem}[thm]{\protect\remarkname}
  \theoremstyle{plain}
  \newtheorem{prop}[thm]{\protect\propositionname}
  \providecommand{\conjecturename}{Conjecture}
  \providecommand{\definitionname}{Definition}
  \providecommand{\lemmaname}{Lemma}
  \providecommand{\propositionname}{Proposition}
  \providecommand{\remarkname}{Remark}
\providecommand{\theoremname}{Theorem}
\begin{document}

\address{Department of Mathematics \\ Purdue University \\ 150 N. University St. \\ West Lafayette IN \\ USA \\ 47907 }
\email{pdesilva@math.purdue.edu}

\begin{onehalfspace}

\title{\noindent A note on two Conjectures on Dimension funcitons of $C^{*}$-algebras.}
\end{onehalfspace}

\author{\noindent Kaushika De Silva}
\begin{abstract}
\noindent Let $A$ be an arbitrary $C^*$ algebra. In \cite{BH} Blackadar and Handelman conjectured the set of lower semicontinuous dimension functions on $A$ to be pointwise dense in the set $DF(A)$ of all
dimension functions on $A$ and $DF(A)$ to be a Choquet simplex. We provide an equivalent 
condition for the first conjecture for unital $A$. Then by applying this condition we confirm the first Conjecture 
for all unital $A$ for which either the radius of comparison is finite or the semigroup $W(A)$
is almost unperforated. As far as we know the most general results on the first Conjecture
up to now assumes exactness, simplicity and moreover stronger regularity properties such as strict comparison. Our
results are achieved through applications of the techniques developed  in \cite{BR} and \cite{R}.

We also note that, whenever the first Conjecture holds for some unital $A$  and extreme boundary of the the quasitrace simplex of $A$ is finite, then every dimension function of $A$ is  lower semicontinuous and $DF(A)$ is affinely homeomorphic to the quasitrace simplex of $A$. Combing this with the said results on the first Conjecture give us a class of algebras for
which $DF(A)$ is a Choquet simplex, i.e. gives a new class for which the 2nd Conjecture mentioned above holds.
\end{abstract}

\maketitle

\section{\noindent Introduction.}

In \cite{C2} (c.f \cite{C1} ) Cuntz introduced the subequivalence relation $\preccurlyeq$ and used the relation to define dimension functions on (simple unital) $C^*$-algebras. Cuntz then associated the partially ordered abelian group $K_{0}^{*}(A)$  to a (simple unital) $C^*$-algebra $A$ and showed that dimension functions on $A$  bijectively correspond to states on $K_{0}^{*}(A)$, making available the methods of \cite{GH} to study dimension functions on $C^*$-algebras. As a group, $K_{0}^{*}(A)$ is the Grothendieck group of the Cuntz semigroup $W(A)$ which is the natural extension of the Murray-von Neumann semigroup of projections
$V(A)$ to positive elements in matrix algebras over $A$.

Continuing from \cite{C2}, Handelman \cite{Han} and later Blackdar and Handelman \cite{BH} developed a more general and a detailed theory for dimension functions on $C^*$-algebras. We focus on two Conjectures posted in \cite{BH};

\begin{conjecture}\cite{BH}\label{con1}
For any $C^{*}$- algebra $A$, the set of lower semicontinous dimension functions $LDF(A)$ is dense in $DF(A)$
in the topology of point wise convergence.
\end{conjecture}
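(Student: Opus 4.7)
The plan is to work inside the Cuntz semigroup framework, where every dimension function on $A$ corresponds to a normalized state on $W(A)$, and lower semicontinuity translates into preservation of suprema of the increasing nets $d((a-\epsilon)_{+})$ as $\epsilon \downarrow 0$. First I would reduce to the unital setting by passing to the unitization $\widetilde A$: one checks that the natural inclusion $W(A) \hookrightarrow W(\widetilde A)$ induces a continuous surjection $DF(\widetilde A) \to DF(A)$ sending $LDF(\widetilde A)$ onto $LDF(A)$, so that density in the unital case implies density in general.

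With unitality in hand I would invoke the equivalent condition for Conjecture~\ref{con1} that this paper establishes, reducing the problem to verifying that condition on $W(A)$ for every unital $C^{*}$-algebra. The core construction I would attempt is an \emph{inner regularization}: given $d \in DF(A)$, a finite subset $F = \{a_{1},\ldots,a_{n}\} \subset W(A)$, and $\epsilon > 0$, define $d^{-}(a) = \sup \{ d(b) : b \ll a \}$, where $\ll$ denotes the way-below relation in the completion $\mathrm{Cu}(A)$ of $W(A)$. This functional is automatically lower semicontinuous; using the machinery of \cite{BR} and \cite{R} I would verify that $d^{-}$ remains additive, and by approximating each $a_{i}$ from below by its cut-downs $(a_{i}-\delta)_{+}$ show that $|d(a_{i}) - d^{-}(a_{i})| < \epsilon$. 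Any residual discrepancy on $F$ would be closed by averaging $d^{-}$ against an lsc dimension function arising from a $2$-quasitrace, following Haagerup's construction.

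The hard step is verifying additivity of $d^{-}$ and smallness of $d-d^{-}$ with no regularity assumption whatsoever on $W(A)$: in the absence of almost unperforation, witnesses for $d^{-}(a_{1}+a_{2})$ cannot in general be split additively along $\ll$, and the pointwise gap $d-d^{-}$ can be bounded away from zero on pathological elements. To bypass this I would attempt a reduction to a sequence algebra or central sequence algebra $A_{\omega}$, where general principles often force a measure of regularity; one proves the density statement there and then lifts the lsc approximants back through the diagonal embedding $A \hookrightarrow A_{\omega}$, the delicate point being control of the restriction of lsc states under this lift. A complementary Choquet-theoretic approach is also available: if density failed, Hahn--Banach would produce a nonzero affine functional on $DF(A)$ vanishing on $LDF(A)$, and one would have to rule out such a functional by exhibiting explicit lsc approximants distinguishing the same pair of Cuntz classes. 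Either route requires a genuinely new structural input beyond the techniques of \cite{BR}, \cite{R}, and the present paper, which is why the conjecture in the stated generality has remained open.
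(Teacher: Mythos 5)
This statement is a \emph{conjecture} in the paper, not a theorem: the paper never proves it in the stated generality, and neither do you. What the paper actually does is (i) prove, for unital $A$, that Conjecture \ref{con1} is \emph{equivalent} to the map $\iota:W(A)\to \mathit{LAff}_{b}(QT(A))^{+}$ being a stable order embedding (Theorem \ref{altBH}, via the duality results Lemma \ref{domination}, Lemma \ref{density} and Theorem \ref{thmstoem} of \cite{BR}), and (ii) verify that condition only under additional hypotheses --- finite radius of comparison or almost unperforation of $W(A)$ (Theorem \ref{mainB}). Your proposal correctly names this equivalent condition as the natural target, but the construction you offer to verify it unconditionally does not work, and you concede as much in your final sentence. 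So the honest verdict is that you have not produced a proof, because none is known.

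Two of your intermediate steps are concretely false as stated, not merely unverified. First, the claim that $|d(a_{i})-d^{-}(a_{i})|<\epsilon$ where $d^{-}(a)=\sup_{\delta>0}d(\langle(a-\delta)_{+}\rangle)$: this regularization is exactly the map $s\mapsto\overline{s}$ of Proposition \ref{dftoldf}, and if it were uniformly $\epsilon$-close to $d$ for every $\epsilon$ then $d=\overline{d}$, i.e.\ \emph{every} dimension function would already be lower semicontinuous. That is a strictly stronger statement than the conjecture (the paper only obtains $DF(A)=LDF(A)$ under the extra hypothesis that $\partial_{e}(QT(A))$ is finite, and even then only after Conjecture \ref{con1} is known), and the gap $d-\overline{d}$ is precisely the obstruction the conjecture is about; it cannot be made small by fiat. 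Second, your reduction to the unital case asserts that restriction gives a surjection $LDF(\widetilde A)\to LDF(A)$; extending a dimension function on $A$ to $\widetilde A$ while preserving the normalization and lower semicontinuity is not automatic and is not addressed anywhere in the paper (which simply restricts attention to unital $A$ throughout). The remaining suggestions --- sequence algebras, a Hahn--Banach separation argument --- are programmatic: the separation route just restates the problem, since ruling out a functional vanishing on $LDF(A)$ requires exactly the lsc approximants whose existence is in question. If you want a result you can actually prove with the tools of \cite{BR} and \cite{R}, aim at Theorem \ref{mainB}: under finite radius of comparison or almost unperforation the stable-order-embedding condition can be verified, and that is where the paper's genuine content lies.
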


\begin{conjecture}\cite{BH}\label{con2}
The affine space $DF(A)$ is a Choquet simplex for any $C^{*}$- algebra $A$.
\end{conjecture}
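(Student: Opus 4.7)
The plan is to reduce Conjecture \ref{con2} to Conjecture \ref{con1} by exploiting the known identification between lower semicontinuous dimension functions and lower semicontinuous $2$-quasitraces. By earlier work of Blackadar and Handelman, the assignment $d \mapsto \tau_d$ sending $d \in LDF(A)$ to its associated LSC quasitrace is an affine homeomorphism onto the quasitrace simplex $QT(A)$, and $QT(A)$ is known to be a Choquet simplex. Thus if Conjecture \ref{con1} can be established, then $LDF(A)$ sits as a Choquet simplex that is pointwise dense in $DF(A)$, and the remaining task is to transfer the simplex structure from $LDF(A)$ to its closure in a controlled way.

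First I would establish Conjecture \ref{con1} for a wide class of unital $A$; the abstract flags success for algebras with finite radius of comparison or with $W(A)$ almost unperforated, via the techniques of \cite{BR} and \cite{R}. Second, under the auxiliary hypothesis that the extreme boundary $\partial_e QT(A)$ is finite, $QT(A)$ becomes a finite-dimensional simplex, and I would argue that in this regime pointwise density of $LDF(A)$ inside the finite-dimensional compact convex set $DF(A)$ forces the equality $LDF(A)=DF(A)$. The point is that every $d \in DF(A)$ must then coincide with the LSC dimension function arising from some point of the finite-dimensional simplex $QT(A)$, yielding $DF(A) \cong QT(A)$ as Choquet simplices.

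The main obstacle is the general case without any dimensional restriction on $QT(A)$. Pointwise density of $LDF(A)$ in $DF(A)$ alone is not strong enough to transport the Choquet simplex structure, because pointwise limits of lower semicontinuous functions are not in general lower semicontinuous, and a dense Choquet simplex inside a compact convex set need not force the ambient set to be a simplex. A proof of Conjecture \ref{con2} in full generality therefore seems to require a structural input on the partially ordered group $K_0^*(A)$ directly---for instance verifying the Riesz interpolation property on $K_0^*(A)$---rather than passing through density. For this reason I would expect the present line of attack to yield Conjecture \ref{con2} only for the subclass of $A$ for which Conjecture \ref{con1} has been verified and $\partial_e QT(A)$ is finite, leaving the unrestricted simplex property beyond the reach of this method.
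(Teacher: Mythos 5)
Your proposal matches the paper's approach: the conjecture is not proved in general, and the partial result is obtained exactly as you outline, by establishing Conjecture \ref{con1} for finite radius of comparison or almost unperforated $W(A)$ (via the stable order embedding criterion of Theorem \ref{altBH}) and then, when $\partial_{e}(QT(A))$ is finite, deducing $DF(A)=LDF(A)\cong QT(A)$ so that $DF(A)$ is a Choquet simplex (Lemma \ref{lemc2} and Theorem \ref{mainB}(3)). The only slight difference is that the paper does not invoke any finite-dimensionality of $DF(A)$: it uses Krein--Milman to show $LDF(A)=co(\partial_{e}(LDF(A)))$ is itself compact, hence closed, so density alone forces the equality.
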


As shown in \cite{BH}, there is an affine and bijective natural map from the set of normalized 2-quasitraces $QT(A)$ of a $C^*$-algebra $A$ to the set $LDF(A)$, whose inverse is continuous. On the other hand Choquet simplexes are a natural extension of classical (finite) simplexes and the theory on Choquet simplexes is well developed - see \cite{G}. Thus, the Conjectures (if true), provide useful tools that can be applied to derive properties of $DF(A)$.

For non stably finite $C^*$-algebras the Conjectures hold trivially, as in this 
case ${K_0}^*(A)=0$ and $DF(A)$ is the
empty set. In the stably finite case there are several 
classes for which at least one of
the Conjectures are known to hold, as outlined below.

Conjecture 1.1 holds for unital commutative $A$ by \cite[Theorem I.2.4]{BH}. In \cite[Corollary 4.4]{Per}, Conjecture 1.2 is verified for unital (stably finite) $C^{*}$-algebras of real rank zero and stable rank one, providing the first (non trivial) examples for 1.2. The most general results on the Conjectures that we are aware of appear in \cite{BPT}. Theorem B of \cite{BPT} show that conclusions of both the Conjectures hold for unital, simple, separable, stably finite $C^{*}$- algebras which are either exact and $\mathcal{Z}$-stable 
or are $AH$-algebras of slow dimension growth. Furthermore \cite[Remark 6.5]{BPT} asserts that for exact $A$  Conjecture 1.1 holds assuming strict comparison instead of $\mathcal{Z}$-stability. Applying the methods in \cite{BPT}, 
several classes of continuous fields of $C^{*}$-algebras for which 1.1 and 1.2 hold are provided in \cite{ABPP}.

The above mentioned verifications have arisen more or less as applications of structure 
Theorems for $W(A)$ (i.e. \cite[Theorem 2.8]{Per} and \cite[Theorems 6.4 and 6.6]{BPT}). Apart 
form their usefulness in establishing the Conjectures these structure Theorems 
have other important applications - see \cite{BT}
for an example. However, when concerning the conjectures alone such Theorems 
are too strong requirements to ask for, at least if the Conjectures are to hold in full generality.

To our knowledge there has not been any work focused on the conjectures alone and this paper is an
attempt for a step in that direction. We aim to apply theory on state spaces of partially ordered semigroups developed mainly in \cite{BR} (c.f \cite{GH}) to study the conjectures. As it turns out this can be readily achieved, especially in the case of 1.1. 

In particular these techniques (of \cite{BR}) allow us to prove the following Theorem which give an alternate form of Conjecture 1.1 for unital $A$.

\noindent \textbf{Theorem 3.1}
Let $A$ be a unital $C^{*}$-algebra.
Then $LDF(A)$ is dense in $DF(A)$ if and only if $\iota:W(A)\to \mathit{LAff}_{b}(QT(A))^{+}$
is a stable order embedding.

By $\mathit{LAff}_{b}(QT(A))^{+}$ we mean the scaled partially ordered
abelian semigroup of bounded non negative lower semicontinuous
affine maps on $QT(A)$. $\iota$ is the natural map given by $\iota(\langle a\rangle)(\tau)=\lim_{n\to\infty}\tau(a^{\nicefrac{1}{n}}),\forall\tau\in QT(A)$. 
In a sense this is a weaker form of the representation of $W(A)$ given in \cite[Theorem 6.4]{BPT}.

Using the above we prove;

\noindent \textbf{Theorem 3.3} 
Let $A$ be any unital $C^{*}$-algebra. The following hold.

1. If $A$ has finite radius of comparison then $LDF(A)$ is dense in $DF(A)$.

2. If $W(A)$ almost unperforated then $LDF(A)$ is dense in $DF(A)$.

3. If $\partial_{e}(QT(A))$ is a finite set and if either of the
assumptions above (in 1,2) holds for $A$ then $DF(A)=LDF(A)$ and $DF(A)$
is affiinely homeomorphic to $QT(A)$. In particular $DF(A)$ is a
Choquet simplex.

To prove (1) and (2), we verify that the alternate form of Conjecture 1.1 provided in Theorem \ref{altBH} hold in the respective classes. In (1) this done by applying techniques of \cite{BR} once more while in the second case this is done by following the ideas of \cite{R}. Combining the  conclusions of parts 1 and 2 with Lemma \ref{lemc2} -  which mainly is
a consequence of Krein-Milman Theorem - we prove (3).

These results greatly extend the class of unital $C^*$-algebras for which the Conjectures (specially 1.1)
were known to hold. On the one hand these results do not assume simplicity or exactness as in \cite{BPT} and
on the other hand finite radius of comparison is a considerably weaker assumption than any 
of the regularity assumptions considered in \cite{BPT}. Most of the continuous fields considered
in \cite{ABPP} are also known to have finite radius of comparison.

In particular, the counter examples for Elliott's classification conjecture
constructed in \cite{tm} and Villadsen algebras of type I \cite{V1} have finite radius of comparison 
but are not covered by \cite{BPT}.  Furthermore Villadsen algebras of type II (\cite{V2}) are of finite
radius comparison and have unique quasitrace, and thus satisfy both the Conjectures from Theorem \ref{mainB}.
This means that for each $n\in\mathbb{N}$, we now know that there are unital algebras of stable rank $n$
which satisfy the Conjectures \ref{con1} and \ref{con2}.

For simple $C^*$-algebras, almost unperforation of $W(A)$ is equivalent
to strict comparison (i.e zero radius of comparison) and thus the second case may seem some what 
redundant when compared to 1. However, in general (without simplicity)
it is not clear how the two properties relate to each other. 

In the next section we recall some preliminary results and notations that we require. Section 3 contain the proofs of the main results.

\noindent \textbf{Acknowledgments}\textsl{.} I sincerely thank my adviser Prof. Andrew Toms for his guidance and for all encouragement provided throughout the project.

\section{Preliminaries and Notations}

\subsection{Partially ordered abelian semigroups.}

All semigroups we consider will be abelian. In addition we assume all semigroups to contain the identity element $0$.

\begin{defn}\label{p.o.a.s}
A partially ordered semigroup is a pair $(M,\leq)$ where
$M$ is a abelian semigroup and $\leq$ is a partial order on $M$ such that $\forall a,b,c\in M$, $a\leq b\implies a+c\leq b+c$. We also assume that $0\leq a$ for all $a\in M$.
\end{defn}

\begin{rem*}
The term partially ordered semigroup is
used even without assuming $0\leq a,\forall a\in M$
and the term positively ordered semigroup is used for
ones which in addition satisfy this. We do not have a need distinguish the two cases. 
\end{rem*}

All order relations we consider will be partial orders and for convenience we write ordered semigroup to mean a partially ordered semigroup in the sense of \ref{p.o.a.s}. 

An element $u$ in $(M,\leq)$ is called a order unit if for
each $x\in M$ there is some $n\in\mathbb{N}$ with $x\leq nu$.
A triple $(M,\leq,u)$ where $(M,\leq)$ and $u$ are as above is called a scaled ordered semigroup. If
the order and the order unit are clear we may write $M$ to denote $(M,\leq,u)$.

A morphism from $(M,\leq,u)$ to $(N,\leq,v)$
is a map $\phi:M\to N$ which is additive and order preserving with
$\phi(0)=0$ and $\phi(u)=v$.

A state on $(M,\leq,u)$ is a morphism from
$M$ to $(\mathbb{R}^+,\leq,1)$ where $\mathbb{R}^+$ is the additive
semigroup of non negative real numbers and $\leq$ is as usual. The
set of all states of $(M,\leq,u)$ will be denoted by $S(M,\leq,u)$ (or by $S(M)$
if the choice of order unit and order are clear). $S(M)$ is
compact and convex as a subset of the space of all real valued functions
on $M$ in the topology of pointwise convergence.

A morphism $\phi:(M,\leq,u)\to(N,\leq,v)$ as above induce a continuous
affine map $\phi^{\sharp}:S(N)\to S(M)$ via composition.

The following class of morphisms between scaled ordered semigroups was  introduced in \cite{BR}.

\begin{defn}\cite[Definition 2.2]{BR}\label{stoem} 
Let $(M,\leq,u)$,$(N,\leq,v)$ be scaled ordered semigroups and $\phi:M\to N$ be
a morphism of scaled ordered semigroups. $\phi$ is called a stable order embedding
if for any $x,y\in M$, there are $n\in\mathbb{N}$ and $z\in M$
with $nx+z+u\leq ny+z$ if and only if there are $m\in\mathbb{N}$
and $w\in N$ with $m\phi(x)+v+w\leq m\phi(y)+w$.
\end{defn}

We recall some useful results from \cite{BR}.

\begin{lem}\cite[Lemma 2.8]{BR}\label{domination} 
Let $(M,\leq,u)$ be a scaled
ordered semigroup and $x,y\in M$. Then $s(x)<s(y)$
for all $s\in S(M)$ if and only if there is some $n\in\mathbb{N}$
and $z\in M$ such that $nx+z+u\leq ny+z$. 
\end{lem}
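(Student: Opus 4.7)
My plan splits the biconditional into its two directions.

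For $(\Leftarrow)$, the argument is immediate. Given any $s \in S(M,\leq,u)$ and $n \in \mathbb{N}$, $z \in M$ with $nx + z + u \leq ny + z$, applying $s$ and using additivity, monotonicity, $s(u) = 1$, and the fact that $s(z) \in \mathbb{R}$ is finite (hence cancellable), I obtain $n s(x) + 1 \leq n s(y)$, whence $s(x) + 1/n \leq s(y)$.

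The substantive direction is $(\Rightarrow)$, and my plan is to reduce to the classical Goodearl-Handelman state-separation theorem for preordered abelian groups with order unit (\cite{GH}). Let $G$ be the Grothendieck group of the abelian monoid $(M,+)$, with universal map $\pi \colon M \to G$, and equip $G$ with positive cone
\[
G^{+} := \{[b] - [a] : a, b \in M,\ a \leq b \text{ in } M\},
\]
a submonoid of $G$ because $\leq$ on $M$ is translation-invariant. With $[u]$ as order unit -- which it is, since $u$ is an order unit in $M$ and $0 \leq a$ for every $a \in M$ -- the triple $(G, G^{+}, [u])$ is a preordered abelian group with order unit. Each $s \in S(M,\leq,u)$ extends uniquely to a group homomorphism $\tilde{s} \colon G \to \mathbb{R}$ via $\tilde{s}([b] - [a]) = s(b) - s(a)$, well-defined because equality in $G$ is witnessed by some cancelling $c \in M$ and $s$ respects this after cancellation in $\mathbb{R}$; one then checks $\tilde{s} \geq 0$ on $G^{+}$ and $\tilde{s}([u]) = 1$, producing a bijection $S(M,\leq,u) \cong S(G, G^{+}, [u])$.

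I then apply the Goodearl-Handelman theorem to $g := [y] - [x] \in G$. The hypothesis $s(y) > s(x)$ for every $s \in S(M,\leq,u)$ translates to $\tilde{s}(g) > 0$ for every state on $G$, and the theorem yields $n \in \mathbb{N}$ with $ng - [u] \in G^{+}$. Writing $ng - [u] = [b] - [a]$ with $a \leq b$ in $M$, the definition of the Grothendieck group gives $z_{0} \in M$ with $ny + a + z_{0} = nx + u + b + z_{0}$ in $M$. Since $a \leq b$, translating by $ny + z_{0}$ yields $ny + a + z_{0} \leq ny + b + z_{0}$, so $nx + u + (b + z_{0}) \leq ny + (b + z_{0})$, and setting $z := b + z_{0}$ produces the required inequality.

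The main obstacle is the Goodearl-Handelman step itself, a genuine Hahn-Banach/separation argument: one uses compactness of $S(G, G^{+}, [u])$ to convert the pointwise positivity of $\tilde{s}(g)$ into a uniform lower bound $\tilde{s}(ng) \geq \tilde{s}([u])$, and then separates $ng - [u]$ from the complement of $G^{+}$ by a linear functional realised as a state. The remainder of the plan is bookkeeping to pass between the semigroup and the group formulation; the subtle point to verify is that the induced cone $G^{+}$ genuinely captures the order on $M$, which rests squarely on translation-invariance of $\leq$ in Definition \ref{p.o.a.s}.
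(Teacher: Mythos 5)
The paper itself gives no proof of this lemma---it is quoted directly from \cite{BR}---so the comparison is with the argument of Blackadar--R{\o}rdam, and your proposal is in fact essentially that argument: pass to the Grothendieck group $G$ of $M$ with the cone $G^{+}$ induced by $\leq$, note that $[u]$ is an order unit and that $S(M,\leq,u)$ identifies with $S(G,G^{+},[u])$ (the inverse of your extension map being restriction along $\pi$, which is a state on $M$ precisely because $0\leq a$ for all $a$), apply the Goodearl--Handelman theorem (\cite{GH}; see also \cite[Theorem 4.12]{G}) to $g=[y]-[x]$ to get $ng\geq[u]$, and unwind this in $M$ using a cancelling element together with translation-invariance, the converse direction being the trivial computation you give. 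The only caveat is that you use the Goodearl--Handelman separation theorem as a black box and your one-line sketch of its proof is only heuristic (the actual proof extends a functional from the subgroup generated by $x$ and $u$ rather than running a direct Hahn--Banach separation), but since that theorem is exactly what the cited literature supplies, holds for preordered groups with order unit, and covers the degenerate case $S(M)=\emptyset$ as well, this is a legitimate reduction rather than a gap.
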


\begin{lem}\cite[Lemma 2.9]{BR}\label{density}
Let $(M,\leq,u)$ be a scaled ordered semigroup
and $K$ be a nonempty compact convex subset $S(M)$. Suppose for
any $a,b\in M$ if $s(a)<s(b)$ for all $s\in K$ then $s(a)<s(b)$
for all $s\in S(M)$. Then $K=S(M)$. 
\end{lem}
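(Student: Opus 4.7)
The plan is to argue by contradiction, using a Hahn--Banach separation in the pointwise topology together with the hypothesis to produce an absurdity. The key observation is that the hypothesis is tailor-made for a separating-hyperplane argument: any strict linear inequality between evaluations that holds on $K$ can be repackaged as a strict inequality $s(a)<s(b)$ on $K$, which by assumption propagates to all of $S(M)$.

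Suppose $K\neq S(M)$ and pick $s_0\in S(M)\setminus K$. The state space $S(M)$ lives in $\mathbb{R}^{M}$ with the product (pointwise) topology, which is locally convex; $K$ is compact and convex and $\{s_0\}$ is compact and convex, so Hahn--Banach yields a continuous linear functional $\phi$ on $\mathbb{R}^{M}$ and a scalar $\beta\in\mathbb{R}$ with $\phi(s)<\beta<\phi(s_0)$ for every $s\in K$. Continuous linear functionals on $\mathbb{R}^M$ in the product topology are finite linear combinations of coordinate evaluations, so $\phi(t)=\sum_{i=1}^{n} c_i\,t(a_i)$ for some $a_1,\dots,a_n\in M$ and $c_1,\dots,c_n\in\mathbb{R}$.

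I would then clean up the coefficients. Since each function $s\mapsto s(a_i)$ is bounded on the compact set $K$ and the separating inequalities are strict, a sufficiently small perturbation of the $c_i$ and of $\beta$ preserves the strict separation. So I may take $c_i,\beta\in\mathbb{Q}$, and after multiplying through by a common positive denominator, $c_i,\beta\in\mathbb{Z}$. Write $c_i=p_i-q_i$ with $p_i,q_i\in\mathbb{Z}_{\ge 0}$. Using $s(u)=1$ for every state, I can absorb $|\beta|\cdot u$ on whichever side the sign of $\beta$ dictates, forming elements $a,b\in M$ (each a non-negative integer combination of the $a_i$ and $u$) for which $s(a)<s(b)$ holds for every $s\in K$ while $s_0(a)>s_0(b)$. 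Applying the hypothesis of the lemma to $(a,b)$ extends the strict inequality to all of $S(M)$; in particular $s_0(a)<s_0(b)$, contradicting $s_0(a)>s_0(b)$, so $K=S(M)$.

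The conceptual content is just Hahn--Banach plus the identification of the dual of $\mathbb{R}^{M}$ in the product topology. The only place where any care is needed is the conversion in the second paragraph: rationalizing and clearing the coefficients, splitting each $c_i$ into positive and negative integer parts, and then choosing the correct side on which to place the multiple of the order unit $u$ according to the sign of $\beta$, so that both $a$ and $b$ genuinely lie in the semigroup $M$ and the hypothesis of the lemma can be invoked. I expect this bookkeeping to be the main, though routine, obstacle.
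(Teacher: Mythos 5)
Your argument is correct, and it is essentially the standard proof of this lemma (the paper itself only cites \cite[Lemma 2.9]{BR} without reproducing a proof): strict Hahn--Banach separation of $s_0$ from the compact convex set $K$ inside $\mathbb{R}^{M}$, identification of the continuous dual with finite linear combinations of coordinate evaluations, perturbation to rational and then integer coefficients (legitimate since the evaluations $s\mapsto s(a_i)$ are uniformly bounded via the order unit), splitting into positive and negative parts and absorbing $\beta$ with multiples of $u$ to produce $a,b\in M$ with $s(a)<s(b)$ on $K$ but $s_0(a)>s_0(b)$, contradicting the hypothesis. No gaps; the bookkeeping you flag (signs of the $c_i$ and of $\beta$, and that $0\in M$ so degenerate combinations are allowed) is exactly the routine part and you handle it correctly.
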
 

\begin{thm}\cite[Theorem 2.6]{BR}\label{thmstoem}
Let $\phi:(M,\leq,u)\to(N,\leq,v)$
be a morphism of scaled ordered semigroups. Then $\phi$ is a stable order
embedding iff 
\[
S(M)=\lbrace g\circ\phi:\, g\in S(N)\rbrace.
\]
\end{thm}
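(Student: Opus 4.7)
The plan is to prove both implications via the two preparatory lemmas just recalled (Lemma \ref{domination} and Lemma \ref{density}), which collectively let us translate between order-theoretic data in $M$ (and $N$) and separation data in $S(M)$ (and $S(N)$). The payoff is that the stable order embedding condition of Definition \ref{stoem}, after translation, says precisely that $\phi$ induces an injection of state spaces in a strong enough sense, which then pins down $S(M)$ by density.

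For the easy direction ($\Leftarrow$), I would fix $x,y\in M$ and note that by Lemma \ref{domination} applied in $M$, the existence of $n,z$ with $nx+z+u\leq ny+z$ is equivalent to $s(x)<s(y)$ for all $s\in S(M)$, and similarly in $N$ applied to $\phi(x),\phi(y)$. If every state on $M$ has the form $g\circ\phi$ for some $g\in S(N)$, then the strict inequality $s(x)<s(y)$ for all $s\in S(M)$ is equivalent to $g(\phi(x))<g(\phi(y))$ for all $g\in S(N)$, giving the two directions of the biconditional in Definition \ref{stoem} at once.

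For the harder direction ($\Rightarrow$), let $K:=\{g\circ\phi:g\in S(N)\}$. The containment $K\subseteq S(M)$ is routine since $\phi$ is a morphism of scaled ordered semigroups. I would verify that $K$ is nonempty, convex (the map $g\mapsto g\circ\phi$ is affine), and compact (continuous image of the compact $S(N)$ in the topology of pointwise convergence), so that Lemma \ref{density} is applicable. The task then reduces to showing: if $a,b\in M$ satisfy $s(a)<s(b)$ for every $s\in K$, then the same inequality holds for every $s\in S(M)$. But $s(a)<s(b)$ for every $s\in K$ is exactly $g(\phi(a))<g(\phi(b))$ for every $g\in S(N)$; by Lemma \ref{domination} in $N$ this yields $m,w$ with $m\phi(a)+v+w\leq m\phi(b)+w$, and now the stable order embedding hypothesis supplies $n,z\in M$ with $na+z+u\leq nb+z$, from which Lemma \ref{domination} in $M$ delivers $s(a)<s(b)$ on all of $S(M)$.

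The main subtlety is making sure the logical translation through the two applications of Lemma \ref{domination} and the one application of Lemma \ref{density} is airtight: in particular, checking that $K$ really is compact and convex (so that Lemma \ref{density} applies) and that I apply the ``only if'' half of the stable order embedding definition in the correct direction (from a domination relation in $N$ to one in $M$). Once these routine points are confirmed, the argument closes immediately and no further structure on $M$ or $N$ is needed beyond what Definition \ref{p.o.a.s} supplies.
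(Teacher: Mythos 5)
The paper does not actually prove this statement; it is quoted verbatim from \cite{BR} (Theorem 2.6 there), so there is no in-paper argument to compare against. Your reconstruction is essentially the argument of \cite{BR}: Lemma \ref{domination} translates the two domination conditions in Definition \ref{stoem} into strict separation by states of $M$ and of $N$ respectively, which gives the direction ($\Leftarrow$) immediately, and for ($\Rightarrow$) the set $K=\lbrace g\circ\phi: g\in S(N)\rbrace$ is a compact convex subset of $S(M)$ to which Lemma \ref{density} applies after one pass through Lemma \ref{domination} in $N$, the stable order embedding hypothesis, and Lemma \ref{domination} in $M$. That chain of translations is airtight as you present it.

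The one point you wave at but cannot actually ``verify'' is the nonemptiness of $K$, which Lemma \ref{density} requires: $K$ is empty precisely when $S(N)=\emptyset$, and this can genuinely happen for scaled ordered semigroups (e.g.\ $W(A)$ with order unit $\langle 1_A\rangle$ for $A$ not stably finite). The theorem still holds in that degenerate case, but it needs a separate two-line argument rather than Lemma \ref{density}: if $S(N)=\emptyset$, then vacuously $g(\phi(u))<g(\phi(u))$ for all $g\in S(N)$, so by Lemma \ref{domination} in $N$ there are $m,w$ with $m\phi(u)+v+w\leq m\phi(u)+w$; the stable order embedding property (applied to $x=y=u$) then produces $n,z$ with $nu+u+z\leq nu+z$, and Lemma \ref{domination} in $M$ forces $s(u)<s(u)$ for every $s\in S(M)$, i.e.\ $S(M)=\emptyset=K$, so the desired equality holds trivially. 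With that case disposed of, your argument is complete; in the setting this theorem is used in the paper (unital stably finite $A$, where $QT(A)\neq\emptyset$ gives states), the issue does not arise.
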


\begin{rem}
From \cite{BR} the above statements hold even when
$M,N$ are pre-ordered. As we will only be considering partially
ordered semigroups, we limit to this case. 

\end{rem}

\subsection*{Almost unperforation.}

\begin{defn}
An ordered semigroup $(M,\leq)$ is said to be almost unperforated if $kx\leq k^\prime y\implies x\leq y$ for all $x,y\in M$ and  each $k,k^\prime\in\mathbb{N}$ with $k>k^\prime$
\end{defn}

We will need the following Proposition which is proven in \cite{R} (c.f. \cite{B})

\begin{prop}\cite[Proposition 3.2]{R}\label{aup}
Let $(M,\leq)$  be almost unperforated and $u,x\in M$. If $u$ is a order unit and $s(x)<s(u)$ for all $s\in S(M,\leq,u)$ then $x< u$.
\end{prop}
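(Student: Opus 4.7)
My plan is to lift the pointwise state hypothesis to an algebraic inequality via Lemma \ref{domination}, bound the auxiliary element $z$ using that $u$ is an order unit, and close out with almost unperforation.

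Compactness of $S(M)$ in the pointwise topology together with continuity of $s \mapsto s(x)$ yields a uniform gap $r := \sup_{s \in S(M)} s(x) < 1 = s(u)$. I pick integers $n > m \ge 1$ with $nr < m$, which is possible because $r < 1$ (for instance $n = m+1$ and $m > r/(1-r)$). Then $s(nx) \le nr < m = s(mu)$ for every $s \in S(M)$, so Lemma \ref{domination} supplies $N \in \mathbb{N}$ and $z \in M$ with
\[
Nnx + z + u \;\le\; Nmu + z.
\]
Using that $u$ is an order unit, fix $L \in \mathbb{N}$ with $z \le Lu$. Combining $0 \le z$ on the left and $z \le Lu$ on the right, this reduces to $Nnx + u \le (Nm + L)u$.

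The heart of the matter is to arrange $N(n - m) \ge L$, for then $(Nm + L)u \le Nnu$ and we obtain the clean inequality $Nnx + u \le Nnu$. I expect this to be the main obstacle: $L$ depends on the witness $z$ returned by Lemma \ref{domination}, so one has to either enlarge the gap $n-m$ (with $m$ correspondingly large so that $nr < m$ is preserved), or refine the construction behind Lemma \ref{domination} so that the size of $z$ is controlled by the slack $m - nr > 0$. Granted this, the finish is short: pick $K \in \mathbb{N}$ with $x \le Ku$ (possible because $u$ is an order unit), multiply $Nnx + u \le Nnu$ by $K$ to get $KNnx + Ku \le KNnu$, and chain with $x \le Ku$:
\[
(KNn + 1)x \;=\; KNnx + x \;\le\; KNnx + Ku \;\le\; KNnu.
\]
Since $KNn + 1 > KNn$, almost unperforation yields $x \le u$. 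The strict hypothesis $s(x) < s(u)$ forbids $x = u$, hence $x < u$.
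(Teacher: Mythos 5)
The paper itself offers no proof of this proposition --- it is quoted verbatim from R{\o}rdam \cite{R} --- so your argument has to stand on its own, and it does not quite: the step you yourself flag as ``the heart of the matter'' is a genuine gap, not a technicality. Once Lemma \ref{domination} hands you $Nnx+z+u\leq Nmu+z$, you need $N(n-m)\geq L$, where $z\leq Lu$; but $N$ and $z$ (hence $L$) are produced by the lemma only after $n$ and $m$ are fixed, so you cannot go back and enlarge the gap $n-m$ without losing the witnesses, and nothing in Lemma \ref{domination} controls the size of $z$ in terms of the slack $m-nr$. As written, all you can extract is $Nnx+u\leq(Nm+L)u$, and since $Nm+L$ may exceed $Nn$, almost unperforation gives nothing. (A smaller point: both your compactness step and your final ``$s(x)<s(u)$ forbids $x=u$'' tacitly use that $S(M)\neq\emptyset$, which is needed even to give the statement content.)

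The missing idea --- and the one in R{\o}rdam's proof --- is to iterate the inequality \emph{before} discarding $z$, so that the gap grows while $L$ stays fixed. From $Nnx+u+z\leq Nmu+z$ an easy induction gives, for every $k\geq1$,
\[
k(Nnx+u)+z\;\leq\;kNmu+z,
\]
since adding $Nnx+u$ to both sides and applying the original inequality once more passes from $k$ to $k+1$. Now fix $L$ with $z\leq Lu$ and choose $k$ so large that $kN(n-m)>L$. Then
\[
kNnx\;\leq\;kNnx+ku+z\;\leq\;kNmu+z\;\leq\;(kNm+L)u,
\]
and $kNn>kNm+L$, so almost unperforation yields $x\leq u$; your closing observation that a single state with $s(x)<s(u)$ rules out $x=u$ then gives $x<u$. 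Everything else in your write-up --- the uniform bound $r<1$ by compactness, the choice $nr<m<n$, the use of $0\leq z$ to drop $z$ on the left, and the final perforation step --- is correct, and once the iteration is inserted the auxiliary factor $K$ is no longer needed.
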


\subsection{The Cuntz semigroup of a $C^{*}$-algebra and the group $K_{0}^{*}(A)$.}

\noindent As usual, let $W(A)$ denote the Cuntz semigroup of the $C^{*}$-algebra $A$.

That is, 
\[
W(A)=M_{\infty}(A)_{+}\diagup\sim
\]
where $\sim$ is the Cuntz equivalence relation. Recall that for $a,b\in M_{\infty}(A)_{+}$,
$a$ is said to be Cuntz equivalent to $b$ (written $a\sim b$) iff 
$a\preccurlyeq b$ and $b\preccurlyeq a$. $W(A)$  inherits the natural partial order given by
\[\langle a\rangle\leq\langle b\rangle\iff a\preccurlyeq b,
\]
and the pair $(W(A),\leq)$ form a ordered semigroup. 
If $A$ is unital then $\langle1_{A}\rangle$ is an order unit for $(W(A),\leq)$ and 
will always be the chosen order unit of $W(A)$ for us.

Following the notation of \cite{C2} let us write $K_{0}^{*}(A)$ to denote
the Grothendieck group of $W(A)$ and set 
\[
K_{0}^{*}(A)_{_{++}}=\left\{ \gamma(y)-\gamma(x):x,y\in W(A)\text{ and }x\leq y\right\} 
\]
where $\gamma:W(A)\to K_{0}^{*}(A)$ denotes the natural map given
by Grothendieck construction.

 From \cite{BH} (c.f \cite{C2}) the pair $(K_{0}^{*}(A),K_{0}^{*}(A)_{_{++}})$ form a partially ordered
abeilan group and $\gamma(\langle1_{A}\rangle)$ is a order unit
for $(K_{0}^{*}(A),K_{0}^{*}(A)_{_{++}})$. Note also that $(K_{0}^{*}(A),K_{0}^{*}(A)_{_{++}})$
is directed; i.e. $K_{0}^{*}(A)=K_{0}^{*}(A)_{_{++}}-K_{0}^{*}(A)_{_{++}}$. 

A state $s$ on $(G,G_{+},u)$ where $(G,G_{+})$ is a
partially ordered abelian group and $u$ is an order unit,
is an additive map $s:G\to\mathbb{R}$ satisfying $s(G_{+})\subset[0,\infty)$
and $s(u)=1$. Set of all states on $(G,G_{+},u)$ is denoted by $S(G,G_{+},u)$ or just by $S(G)$ 
when there is no room for confusion. As in the semigroup case $S(G,G_{+},u)$ is a compact convex space.
For detailed discussion on partially ordered abeilan groups and their
states see \cite{G}.

\subsection{Dimension functions, Lower semicontinuous dimension functions and
Quasitraces of a $C^{*}$-algebra.}

\begin{defn}\label{dmf}(\cite[Definition I.1.2]{BH} c.f \cite{C2}) 
A dimension function on a unital $C^{*}$-algebra $A$ is a function
$d:M_{\infty}(A)_{+}\to[0,\infty)$ which satisfies the following
conditions;

1. $d(1_A)=1$.

2. $d(a+b)=d(a)+d(b)$ for all $a,b\in M_{\infty}(A)_{+}$ with $a\perp b$.

3. $d(a)\leq d(b)$ for all $a,b\in M_{\infty}(A)_{+}$ with $a\preccurlyeq b$.
\end{defn}

\begin{rem*}In \cite{BH} dimension functions are defined on all
elements in $M_{\infty}(A)$ with some additional requirements. Its easily seen that the two definitions are equivalent and by replacing (1) above with the condition $\sup\lbrace d(a):a\in A_{+}\rbrace=1$ the definition extends to non
unital algebras.
\end{rem*}

The set of all dimension functions on $A$ is denoted by $DF(A)$.

\begin{lem}\label{dmfandstates}(\cite{BH} c.f \cite{C2}) 
For a unital $C^{*}$-algebra $A$, $DF(A)$ is in bijective correspondence with $S(K_{0}^{*}(A),K_{0}^{*}(A)_{_{++}},\gamma(\langle1_{A}\rangle))$ and
thus with  $S=S(W(A),\leq,\langle1_{A}\rangle)$.
\end{lem}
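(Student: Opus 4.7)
The plan is to establish the two bijections separately: first $DF(A) \leftrightarrow S(W(A),\leq,\langle 1_A\rangle)$ at the level of the Cuntz semigroup, and then $S(W(A),\leq,\langle 1_A\rangle) \leftrightarrow S(K_0^*(A),K_0^*(A)_{++},\gamma(\langle 1_A\rangle))$ via the universal property of the Grothendieck construction. Chaining these gives the asserted correspondence.

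For the first bijection, I would argue as follows. Given $d\in DF(A)$, property (3) of Definition \ref{dmf} implies $d(a)=d(b)$ whenever $a\sim b$, so $d$ descends to a well-defined map $\bar d:W(A)\to[0,\infty)$. The additivity axiom $\bar d(\langle a\rangle+\langle b\rangle)=\bar d(\langle a\rangle)+\bar d(\langle b\rangle)$ follows from property (2) because addition in $W(A)$ is represented by orthogonal block-diagonal sums $a\oplus b\in M_{\infty}(A)_+$. Property (3) translates directly to order preservation and the normalization $d(1_A)=1$ gives $\bar d(\langle 1_A\rangle)=1$, so $\bar d\in S(W(A),\leq,\langle 1_A\rangle)$. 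Conversely, given any state $s$ on the semigroup, the formula $d(a):=s(\langle a\rangle)$ defines a function on $M_\infty(A)_+$ for which the three axioms are immediate. The two assignments are clearly mutually inverse.

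For the second bijection, recall that every state $s$ on $W(A)$ extends uniquely to a group homomorphism $\tilde s:K_0^*(A)\to\mathbb{R}$ given by $\tilde s(\gamma(x)-\gamma(y))=s(x)-s(y)$. Positivity of $\tilde s$ on $K_0^*(A)_{++}$ follows because $y\geq x$ in $W(A)$ implies $s(y)\geq s(x)$ by order preservation, and $\tilde s(\gamma(\langle 1_A\rangle))=s(\langle 1_A\rangle)=1$. Conversely, any state $s'$ on $K_0^*(A)$ restricts via $s(x):=s'(\gamma(x))$ to a state on $W(A)$; order preservation on the semigroup side is exactly positivity of $s'$ on $K_0^*(A)_{++}$. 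These assignments are mutually inverse by construction.

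The main obstacle, and the one subtle point, lies in verifying that $\tilde s$ is well-defined despite the fact that $\gamma:W(A)\to K_0^*(A)$ need not be injective (the semigroup $W(A)$ is typically not cancellative). If $\gamma(x)-\gamma(y)=\gamma(x')-\gamma(y')$, then by the Grothendieck construction there is $z\in W(A)$ with $x+y'+z=x'+y+z$ in $W(A)$; applying the additive map $s$ into the cancellative target $\mathbb{R}$ and cancelling $s(z)$ on both sides yields $s(x)-s(y)=s(x')-s(y')$, as required. Everything else is routine checking of the axioms of Definition \ref{dmf} against those of a state on a scaled ordered semigroup and on a scaled ordered group.
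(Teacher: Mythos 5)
Your proposal is correct and follows essentially the same route as the paper's own (outlined) proof: descend a dimension function to a state on $(W(A),\leq,\langle 1_A\rangle)$ and back, then pass between states on $W(A)$ and states on $K_0^*(A)$ via $s'(\gamma(x)-\gamma(y))=s(x)-s(y)$ and restriction along $\gamma$. The only difference is that you spell out the well-definedness of the extension through the Grothendieck construction (cancelling the slack element $z$ after applying $s$), a detail the paper leaves implicit.
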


\begin{proof}
We outline the identifications involved.

Any $s\in S(W(A))$ uniquely determines a state $s^{\prime}$
on $K_{0}^{*}(A)$ which is given by $s^{\prime}(\gamma(x)-\gamma(y))=s(x)-s(y)$. Conversely
if $s^{\prime}\in S(K_{0}^{*}(A))$ then $s(x)=s^{\prime}(\gamma(x))$ is a state
on $W(A)$. The map $s\mapsto s^\prime$ sets up a natural affine homeomorphism between $S(W(A))$ and $S(K_{0}^*(A))$.

On the other hand if $f\in DF(A)$ then $s_f(\langle a \rangle)=f(a)$ is a state on $W(A)$ and if $s\in S(W(A))$ then $f_s$ defined on $M_\infty(A)_+$ by $f_s(a)=s(\langle a\rangle)$ is in $DF(A)$.
\end{proof}

\noindent We will use the above identification freely.

Given $a\in M_{\infty}(A)_+$ and $\epsilon>0$, by $(a-\epsilon)_{+}$ we
denote the element of $C^{*}(a)$ which corresponds (via the functional
calculus of $a$) to the function 
\[
f_{\epsilon}(t)=\max\lbrace t-\epsilon,0\rbrace,\, t\in\sigma(a),
\]
where $\sigma(a)$ is the spectrum of $a.$

A dimension function $s$ is said to be lower semicontinuous 
if for each $a\in M_{\infty}(A)_{+}$
\[
s(\langle a\rangle)\leq\liminf_{n}s_{n}(\langle a_{n}\rangle)
\]
whenever $(a_{n})$ is a sequence in $M_{\infty}(A)_{+}$ converging
to $a$ in norm. 
The above is equivalent to the requirement;
\[
s(\langle a\rangle)=\sup_{\epsilon>0}s(\langle(a-\epsilon)_{+}\rangle)
\] for all $a\in M_{\infty}(A)_{+}.$ 

The set of all lower semicontinuous dimension functions of $A$ is denoted by $LDF(A)$.

Recall that a quasitrace \cite[Definition II.1.1]{BH} is a complex-valued function on a $C^*$-algebra having all the 
usual properties of a tracial state, but with linearity assumed only on
commutative $C^*$-subalgebras. A 2-quasitrace is a  quasitrace on $A$ that
extends to $M_2(A)$. From \cite[Proposition II.4.1]{BH},
any $2$-quasitrace extends to $M_n(A)$ for all $n\in\mathbb{N}$.

A quasitrace $\tau$ is said to be normalized if $||\tau||=\sup\lbrace\tau(a):a\in A_{+},||a||\leq1\rbrace=1$.
In the case that $A$ is unital this is equivalent to $\tau(1_A)=1$.

As usual $QT(A)$ denotes the set of all normalized quasitraces of $A.$

Given $\tau\in QT(A)$ define $d_{\tau}:W(A)\to[0,\infty)$ by 
\[
d_{\tau}(\langle a\rangle)=\lim_{n\to\infty}(a^{\nicefrac{1}{n}}).
\]
 
\begin{thm}\cite[Theorems II.2.2 and II.3.1]{BH}\label{qtldf}
Let $A$ be a $C^{*}$-algebra. The map $d_{\tau}$ is a lower semicontinuous dimension
function on $A$ for each $\tau\in QT(A)$. The assignment $\tau\mapsto d_{\tau}$
gives an affine bijection from $QT(A)$ onto $LDF(A)$ which has a
continuous inverse with respect to the pointwise topologies on both
ends.
\end{thm}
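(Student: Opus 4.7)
The plan is to proceed in two stages: first verifying that $d_\tau$ belongs to $LDF(A)$ for each $\tau\in QT(A)$, then constructing an explicit inverse $d\mapsto \tau_d$ via an integration formula.

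For $d_\tau\in LDF(A)$, I would check the three axioms of Definition \ref{dmf} in turn. Normalization $d_\tau(\langle 1_A\rangle)=1$ is immediate from $\tau(1_A^{1/n})=1$; orthogonal additivity follows because when $a\perp b$ one has $(a+b)^{1/n}=a^{1/n}+b^{1/n}$ via functional calculus, and $\tau$ is linear on the commutative subalgebra generated by $a,b$. Monotonicity under $\preccurlyeq$ would come from R{\o}rdam's lemma (for $a\preccurlyeq b$ and each $\epsilon>0$ there exist $\delta>0$ and $r\in M_\infty(A)$ with $(a-\epsilon)_+=r(b-\delta)_+r^*$) combined with the quasitrace identity $\tau(f(xx^*))=\tau(f(x^*x))$ for positive continuous $f$ vanishing at $0$, yielding $\tau(((a-\epsilon)_+)^{1/n})\leq \tau(b^{1/n})$ and then taking $\epsilon\to 0^+$. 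Lower semicontinuity reduces to the equivalent characterization $d_\tau(\langle a\rangle)=\sup_{\epsilon>0}d_\tau(\langle (a-\epsilon)_+\rangle)$: the $\geq$ direction is immediate from $(a-\epsilon)_+\preccurlyeq a$, and the $\leq$ direction follows since $((a-\epsilon)_+)^{1/n}\to a^{1/n}$ in norm as $\epsilon\to 0^+$ by a uniform continuity argument in the functional calculus, together with norm-continuity of $\tau$ and the fact that the suprema in $\epsilon$ and $n$ are over monotone nets and hence exchangeable.

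For the bijection, affineness of $\tau\mapsto d_\tau$ is immediate, and continuity of the inverse in the pointwise topologies will be automatic since $\tau\mapsto \tau(a)$ is continuous for each fixed $a\in A_+$. The substantive step is constructing the inverse: given $d\in LDF(A)$, I would define
\[
\tau_d(a)=\int_0^\infty d(\langle (a-t)_+\rangle)\,dt,\qquad a\in A_+,
\]
motivated by the exact identity $\int_0^\infty d(\langle (a-t)_+\rangle)\,dt=\sum_i \lambda_i d(\langle p_i\rangle)$ when $a=\sum_i\lambda_i p_i$ with mutually orthogonal projections $p_i$. One then verifies that $\tau_d$ is finite and bounded (from $d(\langle (a-t)_+\rangle)=0$ for $t>\|a\|$); additive on commutative $C^*$-subalgebras (by approximating commuting positive elements by linear combinations of mutually orthogonal spectral projections, applying orthogonal additivity of $d$, and passing to the limit using lower semicontinuity of $d$ with dominated convergence); and satisfies the trace identity $\tau_d(xx^*)=\tau_d(x^*x)$ because $xx^*\sim x^*x$ in $W(A)$. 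Extension to $M_2(A)$ proceeds via the canonical identification $W(M_2(A))\cong W(A)$, on which the same integral formula yields a consistent $2$-quasitrace. Normalization $\tau_d(1_A)=\int_0^1 d(\langle 1_A\rangle)\,dt=1$ is direct. The identification $d_{\tau_d}=d$ follows from a change of variables $s=t^n$ in $\int d(\langle (a^{1/n}-t)_+\rangle)\,dt$ using the Cuntz equivalence $(a^{1/n}-t)_+\sim (a-t^n)_+$, and then passing to the limit $n\to\infty$ via lower semicontinuity of $d$; the same identity also furnishes injectivity of $\tau\mapsto d_\tau$.

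The main obstacle will be verifying that $\tau_d$ is indeed a quasitrace, particularly its linearity on commutative $C^*$-subalgebras. The integral formula does not make this linearity manifest, and the approximation scheme through mutually orthogonal spectral projections requires simultaneously controlling norm-convergence inside $A$ and monotone/dominated convergence for the outer integral; the lower semicontinuity of $d$ is essential here, as the theorem fails without it. Everything else --- bounding $\tau_d$, extending to $M_n(A)$, and recovering $d$ from $\tau_d$ --- is comparatively routine once linearity on commutative subalgebras has been established.
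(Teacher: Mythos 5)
This statement is not proved in the paper at all: it is imported verbatim from Blackadar--Handelman (Theorems II.2.2 and II.3.1 of \cite{BH}), so the only meaningful comparison is with the classical argument there. Your outline follows the same overall architecture as \cite{BH} (check that $d_\tau$ is a lower semicontinuous dimension function, then invert via $\tau_d(a)=\int_0^\infty d(\langle(a-t)_+\rangle)\,dt$, and recover $d$ from $\tau_d$ by the substitution $s=t^n$), and you correctly single out linearity of $\tau_d$ on commutative $C^*$-subalgebras as the crux. Note also that the ``norm-continuity of $\tau$'' you invoke in the forward direction is itself a nontrivial theorem about $2$-quasitraces proved in \cite{BH}, not a definitional property.

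The genuine gap is in your proposed proof of that crux. You cannot ``approximate commuting positive elements by linear combinations of mutually orthogonal spectral projections'': a commutative $C^*$-algebra such as $C[0,1]$ contains no nontrivial projections, the spectral projections live only in a bidual or Borel completion, and $d$ is defined only on $M_\infty(A)_+$, so the proposed approximants are not in its domain and orthogonal additivity of $d$ cannot be applied to them. The working argument (and the one in \cite{BH}) is measure-theoretic on the spectrum: on a commutative subalgebra $C_0(X)\subset A$, Cuntz comparison is governed by open supports, so $d$ induces a well-defined set function $\mu_d(U)=d(\langle f\rangle)$ for any $f$ with open support $U$; lower semicontinuity of $d$ supplies the inner regularity needed to make $\mu_d$ (the restriction of) a Borel measure, and then $\tau_d(f)=\int_X f\,d\mu_d$, which is manifestly linear on $C_0(X)$ and agrees with the layer-cake integral. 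Separately, your justification of continuity of the inverse is misdirected: continuity of $\tau\mapsto\tau(a)$ only describes the topology on $QT(A)$, whereas what must be shown is that $d\mapsto\tau_d(a)$ is continuous on $LDF(A)$; this does follow from your integral formula, since the integrands $t\mapsto d(\langle(a-t)_+\rangle)$ are monotone, bounded by $1$, and supported in $[0,\|a\|]$, so pointwise convergence of a net of dimension functions forces convergence of the integrals (e.g. by trapping with Riemann sums), but some such argument has to be made rather than declared automatic.
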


For unital $A$, $QT(A)$ is a Choquet simplex \cite[Theorem II.4.4]{BH}. Given a Choquet
simplex $K$, the set of all non negative valued bounded lower semicontinuous
affine maps from $K$ into $\mathbb{R}$ is denoted $\mathit{LAff}_{b}(K)^{+}$. 
With pointwise addition and pointwise ordering $\mathit{LAff}_{b}(K)^{+}$ form
a ordered semigroup - see \cite{G} for a detailed discussion on these topics.

For $\langle a\rangle\in W(A)$ define $\iota(\langle a\rangle):QT(A)\to[0,\infty)$
by 
\[
\iota(\langle a\rangle)(\tau)=d_{\tau}(\langle a\rangle),\text{ }\forall\tau\in QT(A).
\]
Clearly $\iota(\langle a\rangle)$ is well defined with and $\iota(\langle a\rangle)\in\mathit{LAff}_{b}(QT(A))^{+}$ for all $\langle a\rangle\in W(A)$. Note that
$\iota$ defines a morphism from $(W(A),\leq,\langle 1_A\rangle)$  to 
$(\mathit{LAff}_{b}(QT(A))^{+},\leq,1)$ where $1$ is the constant
function $1$ on $QT(A)$ which is an order unit for $\mathit{LAff}_{b}(QT(A))^{+}.$

We end this section by recalling the definition of radius of comparison
of a $C^{*}$- algebra.

\begin{defn}
Let $A$ be a $C^{*}$-algebra. $A$
has finite radius of comparison if there is some real number $r>0$
such that the following hold for all $a,b\in M_\infty(A)_+$;
\begin{equation}\label{rc}
(d_{\tau}(\langle a\rangle)+ r\leq d_{\tau}(\langle b\rangle),\forall\tau\in QT(A))\implies a\preccurlyeq b.
\end{equation}

If $A$ is of finite radius of comparison, the radius of
comparison of $A$ ($rc(A)$) is the infimum of all $r$ as in\ref{rc}. If not the radius of comparison is infinite and we write
$rc(A)=\infty$. Note that $rc(A)=0$ iff $A$ has strict comparison.
\end{defn}

\section{\noindent Proof of the main results}

Unless stated otherwise all $C^*$-algebras
are assumed to be unital and stably finite. Recall that in the non stably finite case
conjectures hold trivially.

\begin{thm}\label{altBH} 
Let $A$ be a unital $C^{*}$-algebra.
Then $LDF(A)$ is dense in $DF(A)$ if and only if $\iota:(W(A),\leq,\langle1_A\rangle)\to (\mathit{LAff}_{b}(QT(A))^{+},\leq,1)$
is a stable order embedding, where $\mathit{LAff}_{b}(QT(A))^{+}$ and $\iota$ is as
defined in the previous section and $1$ denote the constant function $1$.
\end{thm}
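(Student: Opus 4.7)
The plan is to show that both ``$LDF(A)$ is dense in $DF(A)$'' and ``$\iota$ is a stable order embedding'' are equivalent to the single algebraic condition
\[
(\diamond)\quad \forall a,b\in W(A):\ (\exists\,\epsilon>0\ \text{with}\ d_\tau(b)-d_\tau(a)\ge\epsilon\ \forall \tau\in QT(A))\ \Longrightarrow\ (s(a)<s(b)\ \forall s\in S(W(A))),
\]
using Lemmas \ref{domination} and \ref{density} together with compactness/boundedness arguments that upgrade pointwise strict inequalities to uniform $\epsilon$-gaps.

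\textbf{Density side.} Via Lemma \ref{dmfandstates} I identify $LDF(A)$ with a subset of $DF(A)=S(W(A))$. Since $LDF(A)$ is the convex (affine bijective) image of $QT(A)$, its closure $K:=\overline{LDF(A)}$ is a nonempty compact convex subset of $S(W(A))$. Applying Lemma \ref{density} to $K$ reduces density of $LDF(A)$ in $DF(A)$ to the implication: for all $a,b\in W(A)$, $s(a)<s(b)$ for all $s\in K$ implies $s(a)<s(b)$ for all $s\in S(W(A))$. Continuity of evaluation on $S(W(A))$ and compactness of $K$ upgrade ``$s(a)<s(b)$ on $K$'' to a uniform gap $s(b)-s(a)\ge\epsilon$ on $K$, which by density and continuity is equivalent to $d_\tau(b)-d_\tau(a)\ge\epsilon$ on $QT(A)$. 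So density of $LDF(A)$ is exactly $(\diamond)$.

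\textbf{Stable order embedding side.} By Definition \ref{stoem} combined with Lemma \ref{domination} applied separately in $W(A)$ and in $\mathit{LAff}_b(QT(A))^+$, $\iota$ is a stable order embedding iff
\[
h(\iota(a))<h(\iota(b))\ \forall h\in S(\mathit{LAff}_b(QT(A))^+)\ \Longleftrightarrow\ s(a)<s(b)\ \forall s\in S(W(A)).
\]
The ``$\Rightarrow$'' direction is automatic because $h\circ\iota\in S(W(A))$ for every $h$, so only the other direction requires work. By Lemma \ref{domination} in $\mathit{LAff}_b(QT(A))^+$, the hypothesis ``$h(\iota(a))<h(\iota(b))\ \forall h$'' is equivalent to the existence of $m\in\mathbb{N}$ and $w\in \mathit{LAff}_b(QT(A))^+$ with $m\iota(a)+1+w\le m\iota(b)+w$; since $w(\tau)$ is finite at every $\tau$ (boundedness), cancelling pointwise yields $d_\tau(b)-d_\tau(a)\ge 1/m$, and conversely any uniform $\epsilon$-gap yields such an $m$ (with $w=0$). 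Hence the stable order embedding condition is also $(\diamond)$, and the two properties are equivalent.

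\textbf{Main obstacle.} The crucial technical step is the upgrade from a pointwise strict inequality on $QT(A)$ to a uniform $\epsilon$-gap. On the density side this is supplied by compactness of $\overline{LDF(A)}$ and continuity of evaluation, while on the stable order embedding side it comes from boundedness of elements of $\mathit{LAff}_b(QT(A))^+$ permitting cancellation of the witness $w$ in the algebraic inequality from Lemma \ref{domination}. Recognizing that these two mechanisms yield the same algebraic condition $(\diamond)$ is the heart of the proof.
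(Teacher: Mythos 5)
Your proof is correct and follows essentially the same route as the paper: both directions rest on Lemmas \ref{domination} and \ref{density} plus compactness of $\overline{LDF(A)}$ to upgrade pointwise strict inequalities to a uniform gap, and your condition $(\diamond)$ merely packages the two implications the paper proves separately (your explicit cancellation of the witness $w$ in $\mathit{LAff}_b(QT(A))^{+}$ is a point the paper glosses over). One cosmetic slip: the direction of your displayed equivalence that is automatic from $h\circ\iota\in S(W(A))$ is the right-to-left implication, not ``$\Rightarrow$''; since your subsequent argument handles exactly the remaining direction, nothing is affected.
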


\begin{proof}
Suppose $\iota$ is a stable order embedding. 

Let $K$ denote the closure (in pointwise
convergence) of $LDF(A)$ in $DF(A)$. Then $K$ is a compact convex
subset of $DF(A)$. Suppose $x,y\in W(A)$ are such that $d(x)<d(y)$
for all $d\in K$. The function defined on $K$ given by $d\mapsto d(x)-d(y)$
is strictly positive and continuous on $K$ in pointwise topology.
Since $K$ is compact the function attains a minimum $\delta>0$ on $K$.

Choose some $n\in\mathbb{N}$ large enough so that $n\delta>1$. 

Then,
\[
nd(x)+1\leq nd(y),\forall d\in K.
\]
In particular, 
\[
nd_{\tau}(x)+1\leq nd_{\tau}(y),\forall\tau\in QT(A).
\]

Therefore, 
\[
n\iota(x)+1\leq n\iota(y).
\]

Hence, as $\iota$ is a stable order embedding, there is some $m\in\mathbb{N}$
and $z\in W(A)$ such that, 
\[
mnx+z+\langle1_{A}\rangle\leq mny+z.
\]

Then it follows, 
\[
s(x)<s(y),\forall s\in DF(A).
\]
Therefore, by Lemma \ref{density} $K=DF(A)$, i.e. $LDF(A)$
is dense in $DF(A)$.

Now suppose $LDF(A)$ is dense in $DF(A)$.

Note that in general $\iota$ is an order preserving homomorphism. To verify its a stable order
embedding let $x,y\in W(A)$ and suppose that there is some $n\in\mathbb{N}$
such that, 
\[
n\iota(x)+1\leq n\iota(y).
\]

Then for all $\tau\in QT(A)$, 
\[
d_{\tau}(nx+\langle1_{A}\rangle)\leq d_{\tau}(ny).
\]

Therefore, since $LDF(A)$ is dense in $DF(A)$ by assumption, 
\begin{eqnarray*}
s(nx+\langle1_{A}\rangle)&\leq &s(ny),\forall s\in DF(A)\\
s(nx)&< & s(ny),\forall s\in DF(A).\\
\end{eqnarray*}
Therefore by Lemma \ref{domination}, there is some
$m\in\mathbb{N}$ and $z\in W(A)$ such that,
\[
mnx+\langle1_{A}\rangle+z\leq mny+z
\]
and $\iota$ is a stable order embedding.
\end{proof}

Given $s\in DF(A)$ define $\overline{s}:W(A)\to [0,\infty)$ by
\[
\overline{s}(\langle a\rangle)=\sup_{\epsilon>0} s(\langle(a-\epsilon)_+\rangle).
\]
We need the following Proposition from \cite{R} (c.f \cite{BH}) to prove part 2 of Theorem \ref{mainB}.

\begin{prop}\cite[Proposition 4.1]{R}\label{dftoldf}
Let $A$ be a $C^*$-algebra and let $s\in DF(A)$. Then $\overline{s}$ defined above is a well defined lower semicontinuous dimension function and $\overline{s}(\langle a\rangle)\leq s(\langle a\rangle)$ for all $a\in M_\infty(A)_+$. 
\end{prop}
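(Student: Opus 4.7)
The plan is to verify directly, using three standard facts about the functions $(a-\epsilon)_+$, that $\overline{s}$ satisfies each axiom of Definition \ref{dmf} together with the lower-semicontinuity criterion. The facts are: (i) if $a\perp b$ then $(a+b-\epsilon)_+=(a-\epsilon)_++(b-\epsilon)_+$ (by functional calculus, since $a,b,a+b$ mutually commute and $f(x)=(x-\epsilon)_+$ satisfies $f(0)=0$); (ii) $((a-\epsilon)_+-\delta)_+=(a-\epsilon-\delta)_+$ (composition inside the continuous functional calculus of $a$); and (iii) if $a\preccurlyeq b$ then for every $\epsilon>0$ there is $\delta>0$ with $(a-\epsilon)_+\preccurlyeq(b-\delta)_+$, the standard R\o rdam cut-down lemma.

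First I would note that for $0<\epsilon_1<\epsilon_2$ one has $(a-\epsilon_2)_+\preccurlyeq(a-\epsilon_1)_+\preccurlyeq a$, so the family $s(\langle(a-\epsilon)_+\rangle)$ is monotone in $\epsilon$ and bounded above by $s(\langle a\rangle)<\infty$. Thus the supremum is a finite limit as $\epsilon\downarrow 0$, giving simultaneously that $\overline{s}$ takes values in $[0,\infty)$ and that $\overline{s}(\langle a\rangle)\leq s(\langle a\rangle)$. Cuntz invariance of $\overline{s}$ will be automatic from the order-preservation step below.

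Next I would check the three dimension-function axioms. For normalization, if $0<\epsilon<1$ then $(1_A-\epsilon)_+=(1-\epsilon)1_A$ is invertible, hence Cuntz-equivalent to $1_A$, so every term in the supremum equals $1$ and $\overline{s}(\langle 1_A\rangle)=1$. For orthogonal additivity, fact (i) yields
\[
s(\langle(a+b-\epsilon)_+\rangle)=s(\langle(a-\epsilon)_+\rangle)+s(\langle(b-\epsilon)_+\rangle),
\]
since $(a-\epsilon)_+$ and $(b-\epsilon)_+$ remain orthogonal; as both summands are monotone in $\epsilon$ the suprema add, giving $\overline{s}(\langle a+b\rangle)=\overline{s}(\langle a\rangle)+\overline{s}(\langle b\rangle)$. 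For order preservation, if $a\preccurlyeq b$ and $\epsilon>0$, fact (iii) supplies $\delta>0$ with $(a-\epsilon)_+\preccurlyeq(b-\delta)_+$, whence
\[
s(\langle(a-\epsilon)_+\rangle)\leq s(\langle(b-\delta)_+\rangle)\leq\overline{s}(\langle b\rangle);
\]
taking $\sup_{\epsilon>0}$ yields $\overline{s}(\langle a\rangle)\leq\overline{s}(\langle b\rangle)$, and hence also Cuntz invariance.

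Finally, for lower semicontinuity, fact (ii) gives
\[
\overline{s}(\langle(a-\epsilon)_+\rangle)=\sup_{\delta>0}s(\langle(a-\epsilon-\delta)_+\rangle),
\]
and taking $\sup_{\epsilon>0}$ while reindexing $\eta=\epsilon+\delta$ (every $\eta>0$ is attained, e.g.\ by $\epsilon=\delta=\eta/2$) produces $\sup_{\epsilon>0}\overline{s}(\langle(a-\epsilon)_+\rangle)=\overline{s}(\langle a\rangle)$, which is the required lsc criterion. The only genuinely technical input is fact (iii); everything else is functional calculus and rearrangement of monotone suprema, so I expect no real obstacle beyond invoking the cut-down lemma correctly.
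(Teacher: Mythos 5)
Your argument is correct, but note that the paper offers no proof of this statement at all: it is imported verbatim as \cite[Proposition 4.1]{R}, so the only comparison available is with R{\o}rdam's original argument, and yours is essentially that standard one (monotonicity of $\epsilon\mapsto s(\langle(a-\epsilon)_+\rangle)$ giving finiteness and $\overline{s}\leq s$, orthogonal additivity via $f(a+b)=f(a)+f(b)$ for $f(0)=0$, order preservation via the cut-down lemma $(a-\epsilon)_+\preccurlyeq(b-\delta)_+$, and lower semicontinuity via $((a-\epsilon)_+-\delta)_+=(a-\epsilon-\delta)_+$). Each of your facts (i)--(iii) is genuine, and deriving Cuntz-invariance of $\overline{s}$ from order preservation is exactly right. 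One caveat: your normalization step uses $1_A$, so it proves the statement for unital $A$; in the genuinely nonunital setting (where normalization means $\sup\{d(a):a\in A_+\}=1$) the conclusion can actually fail --- e.g.\ on $C_0(0,1]$ a dimension function built from an ultrafilter limit along a sequence tending to $0$ has $\overline{s}\equiv 0$ --- so unitality is not a cosmetic assumption. Since the paper's Definition of dimension function is stated for unital $A$ and Section 3 assumes all algebras unital, your proof covers exactly the case in which the Proposition is used.
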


The following is mainly a consequence of Krein-Milman Theorem.

\begin{lem}\label{lemc2}
Suppose $A$ is a unital $C^{*}$-algebra with $\partial_{e}(QT(A))$
finite and non empty . Then $LDF(A)$ is compact and moreover the
map $g:QT(A)\to LDF(A)$ given by $\tau\mapsto d_{\tau}$ is an affine
homeomorphism. If its also the case that Conjecture 1.1 holds for $A$
then $DF(A)=LDF(A)$ and $DF(A)$ is affinely homeomorphic to $QT(A)$.
\end{lem}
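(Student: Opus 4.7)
The plan is to exploit the finiteness of $\partial_{e}(QT(A))$ to reduce the Choquet simplex $QT(A)$ to a classical finite-dimensional geometric simplex, at which point the affine bijection $g$ of Theorem \ref{qtldf} will become automatically a homeomorphism. Write $\partial_{e}(QT(A))=\{\tau_{1},\dots,\tau_{n}\}$. My first step is to observe that, since $QT(A)$ is a Choquet simplex by \cite[Theorem II.4.4]{BH}, the Krein--Milman theorem together with uniqueness of the barycentric decomposition yields, for every $\tau\in QT(A)$, a unique tuple $(\lambda_{1}(\tau),\dots,\lambda_{n}(\tau))\in\Delta^{n-1}$ with $\tau=\sum_{i}\lambda_{i}(\tau)\tau_{i}$; this produces an affine homeomorphism $QT(A)\cong\Delta^{n-1}$, and in particular each barycentric coordinate function $\lambda_{i}$ is continuous on $QT(A)$.

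Next I will use this to show that $g$ is continuous in the topology of pointwise convergence on $W(A)$. Since $g$ is affine, for each $\tau\in QT(A)$ and each $x\in W(A)$,
\[
g(\tau)(x)=\sum_{i=1}^{n}\lambda_{i}(\tau)\,d_{\tau_{i}}(x),
\]
which is continuous in $\tau$ for every fixed $x$, so $g$ is continuous. Compactness of $QT(A)$ then gives compactness of $LDF(A)=g(QT(A))$, which is the first assertion. Since $g$ is a continuous bijection from the compact space $QT(A)$ onto $LDF(A)$ sitting inside the Hausdorff space $DF(A)$, it is automatically a homeomorphism; together with the affine structure already recorded in Theorem \ref{qtldf}, this establishes the second assertion.

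For the third part, assume additionally that Conjecture \ref{con1} holds for $A$, so $LDF(A)$ is dense in $DF(A)$. Since I have just shown $LDF(A)$ is compact, it is closed in the Hausdorff space $DF(A)$, and closedness combined with density forces $LDF(A)=DF(A)$. Composing this equality with the homeomorphism $g$ yields the desired affine homeomorphism $QT(A)\cong DF(A)$, from which $DF(A)$ inherits the Choquet simplex property.

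The main substantive step will be the continuity of $g$ established in the second paragraph; this is precisely where the finiteness of $\partial_{e}(QT(A))$ is essential, as it is what reduces $QT(A)$ to a finite-dimensional simplex with continuous barycentric coordinates. Once that is in place, everything else will reduce to standard point-set topology: a continuous bijection from a compact space onto a Hausdorff space is a homeomorphism, and a closed dense subset of a Hausdorff space is the whole space.
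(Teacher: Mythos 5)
Your proof is correct, and it finishes parts one and three exactly as the paper does (compact image, then compact $\Rightarrow$ closed, closed $+$ dense $\Rightarrow$ equality); the difference lies in how the homeomorphism is obtained. You prove continuity of $g$ directly: you identify $QT(A)$ with $\Delta^{n-1}$ via barycentric coordinates and write $g(\tau)(x)=\sum_i\lambda_i(\tau)d_{\tau_i}(x)$, which requires the Choquet simplex property of $QT(A)$ (uniqueness of the decomposition, so that the $\lambda_i$ are well defined, and then the compact-to-Hausdorff argument to see they are continuous). Note that your very first step already uses, implicitly, the observation the paper makes explicit: Krein--Milman only gives $QT(A)=\overline{co}(\partial_e QT(A))$, and one needs that the convex hull of a finite set is compact, hence closed, to drop the closure and get every $\tau$ as a convex combination of the $\tau_i$. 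The paper instead never establishes continuity of $g$ directly: it pushes the identity $QT(A)=co(\partial_e QT(A))$ through the affine bijection $g$ to see that $LDF(A)$ is the convex hull of the finite set $g(\partial_e QT(A))$, hence compact, and then invokes the continuity of $g^{-1}$ already recorded in Theorem \ref{qtldf} together with the compact-to-Hausdorff principle to conclude $g$ is a homeomorphism. Your route buys a direct proof that $\tau\mapsto d_\tau$ is continuous when $\partial_e(QT(A))$ is finite (not automatic in general), at the cost of invoking uniqueness of barycentric decompositions; the paper's route is marginally lighter, needing only Krein--Milman, affinity of $g$, and the known continuity of $g^{-1}$. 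Both arguments are sound.
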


\begin{proof}

From Theorem \ref{qtldf}, $g:QT(A)\to LDF(A)$ is an affine bijection and $g^{-1}$
is continuous. Since $QT(A)$ is compact and convex, by Krein\textendash Milman Theorem $QT(A)$
is the closure of the convex hull of $\partial_{e}(QT(A))$.
As $\partial_{e}(QT(A))$ is assumed to be finite, its convex hull $co(\partial_{e}(QT(A)))$
is compact and therefore we in fact have;
\[ QT(A)=\overline{co(\partial_{e}(QT(A)))}=co(\partial_{e}(QT(A))).
\]
Thus, 
\[ LDF(A)=g(QT(A))=g(co(\partial_{e}(QT(A))))
\]
and on the other hand since
$g$ is an affine bijection $g(\partial_{e}(QT(A)))=\partial_{e}(LDF(A))$.
Therefore,  
\[LDF(A)=g(co(\partial_{e}(QT(A))))=co(\partial_{e}(LDF(A))).
\]
In particular, since $\partial_{e}(LDF(A))=g(co(\partial_{e}(QT(A))))$ is a non empty finite
set, $LDF(A)$ is compact and $g$ is a homeomorphism.

Now if Conjecture 1.1 is true then $DF(A)=\overline{LDF(A)}$. As we had just noted,
$LDF(A)$ is compact and so it is equal to its own closure. Thus $DF(A)=LDF(A)$ and 
is affinely homeomorphic to $QT(A)$ from the preceding paragraph.
\end{proof}

\begin{thm}\label{mainB}
 Let $A$ be any unital $C^{*}$-algebra.
The following hold.

1. If $A$ has finite radius of comparison then $LDF(A)$ is dense
in $DF(A)$.

2. If $W(A)$ almost unperforated then $LDF(A)$ is dense in $DF(A)$.

3. If $\partial_{e}(QT(A))$ is a finite set and if either of the
assumptions above (in 1,2) holds for $A$ then $DF(A)=LDF(A)$ and $DF(A)$
is affiinely homeomorphic to $QT(A)$. In particular $DF(A)$ is a
Choquet simplex.

\end{thm}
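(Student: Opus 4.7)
\emph{Plan.} The plan is to derive (1) and (2) from Theorem \ref{altBH} by verifying that $\iota$ is a stable order embedding, and then to obtain (3) by combining either with Lemma \ref{lemc2}. To check stable order embedding it suffices to take $x, y \in W(A)$, $m \in \mathbb{N}$ and $w \in \mathit{LAff}_b(QT(A))^+$ satisfying $m\iota(x) + 1 + w \leq m\iota(y) + w$, and produce $n \in \mathbb{N}$ and $z \in W(A)$ with $nx + z + \langle 1_A\rangle \leq ny + z$ in $W(A)$. Evaluating at $\tau$ and cancelling the finite-valued $w(\tau)$ reduces this hypothesis to the uniform quasitrace gap
\[
m\, d_\tau(x) + 1 \leq m\, d_\tau(y), \qquad \tau \in QT(A).
\]

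For (1), fix $r > rc(A)$ and choose $n \in \mathbb{N}$ with $n \geq m(r+1)$. The quasitrace gap then yields $d_\tau(nx + \langle 1_A\rangle) + r = n d_\tau(x) + 1 + r \leq n d_\tau(y) = d_\tau(ny)$ for every $\tau \in QT(A)$, so the definition of radius of comparison forces $nx + \langle 1_A\rangle \preccurlyeq ny$ in $W(A)$; the stable-order-embedding condition holds with $z = 0$, and Theorem \ref{altBH} gives density of $LDF(A)$ in $DF(A)$.

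For (2), I would follow R\o rdam's approach from \cite{R}. Proposition \ref{dftoldf} provides, for each $s \in DF(A)$, a lower semicontinuous companion $\bar s \in LDF(A)$ with $\bar s \leq s$, so the quasitrace gap furnishes $m \bar s(x) + 1 \leq m \bar s(y)$ for every $s \in DF(A)$. The role of almost unperforation, via Proposition \ref{aup}, is to upgrade a uniform strict comparison on states of $(W(A), \leq, \langle 1_A\rangle)$ into an inequality in $W(A)$. Concretely, since $d_\tau(x)$ is uniformly bounded by some $C$ (as $x \leq N\langle 1_A\rangle$ for some $N$), choosing $k$ with $k/m > C+1$ produces a uniform strict gap $(k+1) d_\tau(x) + 1 \leq k d_\tau(y)$ at every quasitrace; I expect Proposition \ref{aup} together with almost unperforation to then deliver $(k+1) x + \langle 1_A\rangle \preccurlyeq ky$ in $W(A)$. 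Setting $n = k$ and $z = x$ would give the desired stable-order-embedding witness. The main obstacle is precisely this last step: passing from a uniform quasitrace gap to a genuine Cuntz-semigroup inequality without simplicity, and arranging the combinatorics so that Proposition \ref{aup} applies with the global order unit $\langle 1_A\rangle$, is the delicate point that distinguishes the non-simple situation from the classical strict-comparison results in the literature.

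For (3), Lemma \ref{lemc2} (essentially a consequence of Krein--Milman for the finite-dimensional simplex $QT(A)$) makes $LDF(A)$ compact and affinely homeomorphic to $QT(A)$ via $\tau \mapsto d_\tau$. The density of $LDF(A)$ in $DF(A)$ from (1) or (2) then forces $DF(A) = \overline{LDF(A)} = LDF(A) \cong QT(A)$. As $QT(A)$ is a Choquet simplex for unital $A$ by \cite[Theorem II.4.4]{BH}, so is $DF(A)$.
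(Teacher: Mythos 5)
Your parts (1) and (3) are correct and follow essentially the paper's own route: (1) reduces via Theorem \ref{altBH} to showing $\iota$ is a stable order embedding and beats the radius of comparison by scaling the quasitrace gap (the paper multiplies by $m>r+1$; your choice $n\geq m(r+1)$ is the same idea), and (3) is exactly Lemma \ref{lemc2} plus the fact that $QT(A)$ is a Choquet simplex. Your cancellation of the bounded function $w$ in the hypothesis is also fine.

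Part (2), however, has a genuine gap, and it is exactly the step you flag. Proposition \ref{aup} needs the strict inequality $s(x)<s(u)$ for \emph{every} state of $(W(A),\leq,u)$, where $u$ is an order unit sitting on the dominating side; your quasitrace gap only gives the inequality at the lower semicontinuous dimension functions $d_\tau$, and your proposed dominating element $ky$ need not be an order unit at all (without simplicity, $y$ can generate a proper ideal). The regularization $\overline{s}\leq s$ of Proposition \ref{dftoldf} by itself does not close this: it helps on the right-hand side ($\overline{s}(y)\leq s(y)$) but goes the wrong way on the left, since one cannot bound $s(x)$ above by $\overline{s}(x)$. The paper's proof fixes both defects at once. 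First, it replaces $a$ by the cut-down $(a-\epsilon)_+$, for which the definition of $\overline{s}$ gives the inequality in the needed direction, $s(\langle(a-\epsilon)_+\rangle)\leq\overline{s}(\langle a\rangle)$; combining this with the gap for $\overline{s}$ and with $\overline{s}(\langle b\rangle)\leq s(\langle b\rangle)$ yields, for \emph{every} $s\in DF(A)$,
\[
s\bigl(2n\langle(a-\epsilon)_+\rangle+2\langle1_A\rangle\bigr)<s\bigl(2n\langle b\rangle+\langle1_A\rangle\bigr).
\]
Second, the extra copies of $\langle1_A\rangle$ are added precisely so that the right-hand element $2n\langle b\rangle+\langle1_A\rangle$ \emph{is} an order unit, and then Proposition \ref{aup} (after rescaling dimension functions to states normalized at this unit) gives $2n\langle(a-\epsilon)_+\rangle+2\langle1_A\rangle\leq 2n\langle b\rangle+\langle1_A\rangle$ for every $\epsilon>0$. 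Finally, since $\epsilon$ is independent of $n$, one removes $\epsilon$ by \cite[Proposition 2.4]{R} to get $2n\langle a\rangle+2\langle1_A\rangle\leq 2n\langle b\rangle+\langle1_A\rangle$, which is the stable-order-embedding witness with $z=\langle1_A\rangle$. Without the $(a-\epsilon)_+$ device and the unitization of the right-hand side, the passage from a gap at quasitraces to an inequality in $W(A)$ does not go through, so as written your argument for (2) is incomplete.
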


\begin{proof}
Proof of 1:

Let $rc(A)=r<\infty$. By Theorem \ref{altBH}
we only have to show that $\iota$ is a stable order embedding.

Let $x,y\in W(A)$ and suppose that there is some $n\in\mathbb{N}$
such that, 
\[
n\iota(x)+1\leq n\iota(y).
\]

Then for all $\tau\in QT(A)$, 
\[
d_{\tau}(nx+\langle1_{A}\rangle)\leq d_{\tau}(ny).
\]

Choose some $m\in\mathbb{N}$ large enough so that $m>r+1$. 

Then for all $\tau\in QT(A)$, 
\begin{eqnarray*}
d_{\tau}(mnx+\langle1_{A}\rangle)+r & = & d_{\tau}(mnx)+1+r\\
 & < & d_{\tau}(mnx)+m\\
 & = & md_{\tau}(nx+\langle1_{A}\rangle)\\
 & \leq & md_{\tau}(ny)\\
\end{eqnarray*}

Therefore, since $rc(A)=r$, 
\[
mnx+\langle1_{A}\rangle\leq mny
\]
and $\iota$ is a stable order embedding.

Proof of 2:

Again we only have to show that $\iota$ is a stable order embedding.

So let $a,b\in M_\infty(A)_+$ and suppose that there is some $n\in\mathbb{N}$ such that,
\[
\iota(\langle a\rangle)+1\leq n\iota(\langle b\rangle)
\]
Then for all $\tau\in QT(A)$, 
\begin{equation}
2nd_{\tau}(\langle a\rangle)+ 1< 2nd_{\tau}(\langle b\rangle)\label{eq:2}
\end{equation}

Fix $\epsilon >0$ and let $s\in DF(A)$ be arbitrary.
Then $\overline{s}\in LDF(A)$, where $\overline{s}$ is as in Proposition \ref{dftoldf}.

Thus, by equation \eqref{eq:2}
\begin{equation}
2n\overline{s}(\langle a\rangle)+1< 2n\overline{s}(\langle b\rangle).\label{eq:2.1}
\end{equation}

Note that by definition of $\overline{s}$ we have,
\begin{equation*}
s(\langle (a-\epsilon)_+\rangle)\leq \overline{s}(\langle a\rangle).
\end{equation*}

Combining this with \eqref{eq:2.1} we have,
\begin{eqnarray*}
s(2n\langle (a-\epsilon)_+\rangle+2\langle 1_A\rangle)&=&2 ns(\langle(a-\epsilon)_{+}\rangle)+2\\
 & \leq & 2n\overline{s}(\langle a\rangle)+1+1\\
 & < & 2n\overline{s}(\langle b\rangle)+1\\
 & \leq & 2ns(\langle b\rangle)+1\\
& =& s(2n\langle b\rangle+\langle1_A\rangle)
\end{eqnarray*}

Since $2n\langle b\rangle + \langle 1_A\rangle$ is an order unit for $W(A)$ and $s$ is arbitrary, we apply Proposition \ref{aup} to conclude
\[2n\langle (a-\epsilon)_+\rangle + 2\langle 1_A\rangle \leq 2n\langle b\rangle + \langle 1_A\rangle.
\]

Note that $\epsilon$ is arbitrary and in particular does not depend on $n$. 

Thus by \cite[Proposition 2.4]{R} it follows that,
$$2n\langle a \rangle + 2\langle 1_A\rangle\leq 2n\langle b\rangle +\langle 1_A\rangle.$$

In particular for $z=\langle 1_A\rangle\in W(A)$, $$2n\langle a\rangle +\langle 1_A\rangle + z\leq 2n\langle b\rangle + z$$ 
and we conclude that $\iota :W(A)\to LAff_b(QT(A))^{+}$ is a stable order embedding. This complete the proof of 2.

Proof of 3:
First part follows directly from Lemma \ref{lemc2} and parts 1 and 2 above. To see that $DF(A)$ is a Choquet 
simplex recall $QT(A)$ is Choquet.

\end{proof}

\end{document}